\newtheorem{theorem}{Theorem}[section]
\newtheorem{lemma}[theorem]{Lemma}
\newtheorem{proposition}[theorem]{Proposition}
\theoremstyle{definition}
\numberwithin{equation}{section}
\begin{document}

\baselineskip=15.5pt

\title[Equivariant bundles and logarithmic connections on toric varieties]{Equivariant
principal bundles and logarithmic connections on toric varieties}

\author[I. Biswas]{Indranil Biswas}

\address{School of Mathematics, Tata Institute of Fundamental
Research, Homi Bhabha Road, Mumbai 400005, India}

\email{indranil@math.tifr.res.in}

\author[A. Dey]{Arijit Dey}

\address{Department of Mathematics, Indian Institute of Technology-Madras, Chennai, India}

\email{arijitdey@gmail.com}

\author[M. Poddar]{Mainak Poddar}

\address{Departamento de
Matem\'aticas, Universidad de los Andes, Bogota, Colombia}

\email{mainakp@gmail.com}

\subjclass[2010]{14M27, 14L30, 14M17}

\keywords{Smooth toric variety, logarithmic connection, equivariant principal bundle.}

\thanks{The first-named author is supported by a J. C. Bose Fellowship.}

\begin{abstract}
Let $M$ be a smooth complex projective toric variety equipped with an action of
a torus $T$, such that the complement $D$ of the open $T$--orbit in $M$ is a simple
normal crossing divisor. Let $G$ be a complex reductive affine algebraic group. We
prove that an algebraic principal $G$--bundle $E_G\,\longrightarrow\, M$ admits a
$T$--equivariant structure if and only if $E_G$ admits a logarithmic connection
singular over $D$. If $E_H\,\longrightarrow\, M$ is a $T$--equivariant algebraic
principal $H$--bundle, where $H$ is any complex affine algebraic group, then $E_H$
in fact has a canonical integrable logarithmic connection singular over $D$.
\end{abstract}

\maketitle

\section{Introduction}

Our aim here is to give characterizations of the equivariant principal bundles on 
smooth complex projective toric varieties.

Let $M$ be a smooth complex projective toric variety equipped with an action of
a torus $T$
$$
\rho\, :\, T\times M\, \longrightarrow\, M\, .
$$
For any point $t\, \in\, T$, define the automorphism
$$
\rho_t\, :\, M\,\longrightarrow\, M\, , \ ~ x\, \longmapsto\, \rho(t\, , x)\, .
$$
We assume that the complement $D$ of the open $T$--orbit in $M$ is a simple
normal crossing divisor.

Let $G$ be a complex reductive affine algebraic group, and let $E_G$ be an algebraic
principal $G$--bundle on $M$. In Proposition \ref{prop3} we prove the following:

\textit{The principal $G$--bundle $E_G$ admits a $T$--equivariant
structure if and only if the pulled back principal $G$--bundle $\rho^*_t E_G$
is isomorphic to $E_G$ for every $t\, \in\, T$.}

When $G\,=\, \text{GL}(n, {\mathbb C})$, this result was proved by Klyachko
\cite[p. 342, Proposition 1.2.1]{Kl}.

Using the above characterization of $T$--equivariant principal $G$--bundles on $M$,
we prove the following (see Theorem \ref{thm1}):

\textit{The principal $G$--bundle $E_G$ admits a logarithmic connection singular
over $D$ if and only if $E_G$ admits a $T$--equivariant structure.}

The ``if'' part of Theorem \ref{thm1} does not require $G$ to be reductive. More
precisely,
any $T$--equivariant principal $H$--bundle $E_H\,\longrightarrow\, M$, where $H$ is any
complex affine algebraic group, admits a canonical integrable logarithmic connection
singular over $D$ (see Proposition \ref{prop2}).

\section{Equivariant bundles}\label{se2}

Let ${\mathbb G}_m\,=\, {\mathbb C}\setminus\{0\}$ be the multiplicative group. Take a
complex algebraic group $T$ which is isomorphic to a product of copies of ${\mathbb G}_m$.
Let $M$ be a smooth irreducible
complex projective variety equipped with an algebraic action of $T$
\begin{equation}\label{e1}
\rho\, :\, T\times M\, \longrightarrow\, M
\end{equation}
such that
\begin{itemize}
\item there is a Zariski open dense subset $M^0\, \subset\, M$ with $\rho(T\, ,
M^0) \,=\, M^0$,

\item the action of $T$ on $M^0$ is free and transitive, and

\item the complement $M\setminus M^0$ is a simple normal crossing divisor of $M$.
\end{itemize}
In particular, $M$ is a smooth projective toric variety. Note that $M^0$ is the unique
$T$--orbit in $M$ with trivial isotropy.

Let $G$ be a connected complex affine algebraic group. A $T$--\textit{equivariant}
principal $G$--bundle on $M$ is a pair $(E_G\, , \widetilde{\rho})$, where
$$
p\, :\, E_G\, \longrightarrow\, M
$$
is an algebraic principal $G$--bundle, and
$$
\widetilde{\rho}\, :\, T\times E_G\,\longrightarrow\, E_G
$$
is an algebraic action of $T$ on the total space of $E_G$, such that
\begin{itemize}
\item $p\circ\widetilde{\rho}\,=\, \rho\circ ({\rm Id}_{_T}\times p)$, where
$\rho$ is the action in \eqref{e1}, and

\item the actions of $T$ and $G$ on $E_G$ commute.
\end{itemize}

Fix a point $x_0\, \in\, M^0\, \subset\, M$. Let
\begin{equation}\label{e2}
\iota\, :\, \rho(T\, ,x_0)\,=\, M^0\, \hookrightarrow\, M
\end{equation}
be the inclusion map. Let $M^0\times G$ be the trivial principal $G$--bundle on
$M^0$. It has a tautological integrable algebraic connection given by its trivialization.

Let $(E_G\, ,\widetilde{\rho})$ be a $T$--equivariant principal $G$--bundle on $M$.
Fix a point $z_0\, \in\, (E_G)_{x_0}$. Using $z_0$,
the action $\widetilde{\rho}$ produces an isomorphism of principal $G$--bundles
between $M^0\times G$ and the restriction $E_G\vert_{M^0}$. This isomorphism
of principal $G$--bundles is uniquely determined by the following two conditions:
\begin{itemize}
\item this isomorphism is $T$--equivariant (the action of $T$ on
$M^0\times G$ is given by the action of $T$ on $M^0$), and

\item it takes the point $z_0\,\in\, E_G$ to $(x_0\, ,e)\,\in\, M^0\times G$.
\end{itemize}
Using this trivialization of $E_G\vert_{M^0}$, the tautological integrable algebraic
connection on $M^0\times G$ produces an integrable algebraic connection ${\mathcal D}^0$
on $E_G\vert_{M^0}$. We note that this connection ${\mathcal D}^0$ is independent of the
choice of the points $x_0$ and $z_0$. Indeed, the flat sections for ${\mathcal D}^0$ are
precisely the orbits of $T$ in $E_G\vert_{M^0}$. Note that this description of
${\mathcal D}^0$ does not require choosing base points in $M^0$ and $E_G\vert_{M^0}$.

In Proposition \ref{prop2} it will be shown that ${\mathcal D}^0$ extends to a 
logarithmic connection on $E_G$ over $M$ singular over the simple normal crossing 
divisor $M\setminus M^0$.

\section{Logarithmic connections}

\subsection{A canonical trivialization}

The Lie algebra of $T$ will be denoted by $\mathfrak t$. Let
\begin{equation}\label{cV}
{\mathcal V}\, :=\, M\times {\mathfrak t}\,\longrightarrow\, M
\end{equation}
be the trivial vector bundle with fiber $\mathfrak t$. The holomorphic
tangent bundle of $M$ will be denoted by $TM$. Consider the action of $T$ on $M$
in \eqref{e1}. It produces a homomorphism of ${\mathcal O}_M$--coherent sheaves
\begin{equation}\label{beta}
\beta\, :\, {\mathcal V}\,\longrightarrow\, TM\, .
\end{equation}

Let
$$
D\,:=\, M\setminus M^0
$$
be the simple normal crossing divisor of $M$. Let
\begin{equation}\label{e4}
TM(-\log D)\, \subset\, TM
\end{equation}
be the corresponding logarithmic tangent bundle. We recall that $TM(-\log D)$ is characterized
as the maximal coherent subsheaf of $TM$ that preserves ${\mathcal O}_M(-D)\, \subset\,
{\mathcal O}_M$ for the derivation action of $TM$ on ${\mathcal O}_M$.

\begin{lemma}\label{prop1}
\mbox{}
\begin{enumerate}
\item The image of $\beta$ in \eqref{beta} is contained in the subsheaf $TM(-\log D)\,
\subset\, TM$.

\item The resulting homomorphism $\beta\, :\, {\mathcal V}\,\longrightarrow\,
TM(-\log D)$ is an isomorphism.
\end{enumerate}
\end{lemma}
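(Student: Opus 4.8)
The plan is to check both assertions locally, on $T$--invariant affine charts, exploiting that $M$ is a \emph{smooth} toric variety. Set $n\,=\,\dim M$; since $T$ acts freely and transitively on the dense open subset $M^0$, the orbit map identifies $T$ with $M^0$, so $\dim T\,=\,n$ and all three sheaves $\mathcal{V}$, $TM$, $TM(-\log D)$ are locally free of rank $n$.

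For part (1) I would argue without coordinates. The divisor $D\,=\,M\setminus M^0$ is the complement of a $T$--orbit, hence $\rho_t(D)\,=\,D$ for every $t\,\in\,T$, and therefore each automorphism $\rho_t$ preserves the ideal sheaf $\mathcal{O}_M(-D)\,\subset\,\mathcal{O}_M$. Differentiating the one--parameter subgroups $s\,\longmapsto\,\exp(sv)$ for $v\,\in\,\mathfrak{t}$, it follows that the derivation $\beta(v)$ of $\mathcal{O}_M$ maps $\mathcal{O}_M(-D)$ into itself. Since $TM(-\log D)$ is, by the recalled characterization, the maximal coherent subsheaf of $TM$ with this property, and since it is an $\mathcal{O}_M$--submodule, the image of $\beta$ (which is generated over $\mathcal{O}_M$ by the sections $\beta(v)$) is contained in $TM(-\log D)$.

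For part (2), being an isomorphism of sheaves is local, and everything is $T$--equivariant, so I would restrict to the $T$--invariant affine charts $U_\sigma$ attached to the maximal cones $\sigma$ of the fan; these cover $M$. Because $M$ is smooth, such a chart is $T$--equivariantly isomorphic to $\mathbb{C}^k\times(\mathbb{C}^*)^{n-k}$, with $T\,=\,(\mathbb{C}^*)^n$ acting coordinatewise and with $D\cap U_\sigma$ equal to the union of the first $k$ coordinate hyperplanes. In the corresponding coordinates $(z_1,\ldots,z_k,w_1,\ldots,w_{n-k})$ the infinitesimal action carries the standard basis of $\mathfrak{t}\,=\,\mathbb{C}^n$ to $z_1\partial/\partial z_1,\ldots,z_k\partial/\partial z_k,w_1\partial/\partial w_1,\ldots,w_{n-k}\partial/\partial w_{n-k}$. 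On the other hand, for the coordinate simple normal crossing divisor $\{z_1\cdots z_k\,=\,0\}$ the sheaf $TM(-\log D)$ has the local frame $z_1\partial/\partial z_1,\ldots,z_k\partial/\partial z_k,\partial/\partial w_1,\ldots,\partial/\partial w_{n-k}$, which, the $w_j$ being units on $U_\sigma$, is the same frame as the one above. Thus $\beta$ takes a trivializing frame of $\mathcal{V}\vert_{U_\sigma}$ to a trivializing frame of $TM(-\log D)\vert_{U_\sigma}$, hence is an isomorphism over $U_\sigma$; as these cover $M$, assertion (2) follows.

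The only real input is the structure theory of smooth toric varieties (the equivariant form of the affine charts $U_\sigma$) together with the standard explicit local generators of the logarithmic tangent sheaf, so I do not anticipate a serious obstacle. If one prefers to avoid charts in part (2), here is an alternative: $\det\beta$ is a global section of the line bundle $(\det\mathcal{V})^{*}\otimes\det TM(-\log D)$, which is trivial ($\det\mathcal{V}$ is trivial, and $\det TM(-\log D)\,\cong\,\mathcal{O}_M$ since $K_M+D\,\sim\,0$ on a toric variety); hence $\det\beta$ is a constant, and it is nonzero because $\beta$ restricts to a fiberwise isomorphism over $M^0$ (there the action is free), so $\beta$ is an isomorphism everywhere.
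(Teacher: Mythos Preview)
Your proof is correct. Part~(1) is argued exactly as in the paper: $T$--invariance of $D$ implies that the $T$--action preserves $\mathcal{O}_M(-D)$, and differentiating yields $\beta(\mathcal{V})\subset TM(-\log D)$.

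For part~(2) your primary argument is genuinely different from the paper's. You work locally, using the explicit description of the smooth $T$--charts $U_\sigma\cong\mathbb{C}^k\times(\mathbb{C}^*)^{n-k}$ to see directly that $\beta$ carries a basis of $\mathfrak{t}$ to a frame of $TM(-\log D)$. The paper instead works globally: it invokes the known triviality of $TM(-\log D)$ (Fulton), so that $\bigwedge^r\beta$ becomes a holomorphic function on the projective variety $M$, hence a constant, and nonvanishing since $\beta$ is an isomorphism over $M^0$. Your local argument is self-contained and makes the isomorphism concrete, at the price of invoking the explicit form of smooth toric charts; the paper's argument is shorter but relies on an external result. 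Your final ``alternative'' paragraph is essentially the paper's proof, with the minor variant that you deduce triviality of $\det TM(-\log D)$ from $K_M+D\sim 0$ rather than from triviality of $TM(-\log D)$ itself.
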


\begin{proof}
The divisor $D$ is preserved by the action of $T$ on $M$. Therefore, the action of $T$ on
${\mathcal O}_M$, given by the action of $T$ on $M$, preserves the subsheaf ${\mathcal O}_M(-D)$. From
this it follows immediately that
the subsheaf ${\mathcal O}_M(-D)\, \subset\, {\mathcal O}_M$ is preserved by the derivation action of
the subsheaf $$\beta({\mathcal V})\, \subset\, TM\, .$$ Therefore, we conclude that
$\beta({\mathcal V})\, \subset\, TM(-\log D)$.

It is known that the vector bundle $TM(-\log D)$ is holomorphically trivial
\cite[p. 87, Proposition 2]{Fu}. We note that Proposition 2 of \cite[p. 87]{Fu} says that
$\Omega^1_M(\log D)$ is holomorphically trivial. But $\Omega^1_M(\log D)^*\,=\, TM(-\log D)$,
and hence $TM(-\log D)$ is also holomorphically trivial.

So, both ${\mathcal V}$ and $TM(-\log D)$ are trivial vector bundles, and $\beta$ is a
homomorphism between them which is an isomorphism over the open subset $M^0$. From this it
can be deduced that $\beta$ is an isomorphism over entire $M$. To see this, consider the homomorphism
$$
\bigwedge\nolimits^r \beta\, :\, \bigwedge\nolimits^r{\mathcal V}\,\longrightarrow\,
\bigwedge\nolimits^r TM(-\log D) 
$$
induced by $\beta$, where $r\,=\, \dim_{\mathbb C}T \,=\, \text{rank}({\mathcal V})$. So
$\bigwedge\nolimits^r \beta$ is a holomorphic section of the line bundle
$(\bigwedge^r TM(-\log D))\bigotimes
(\bigwedge^r{\mathcal V})^*$. This line bundle $(\bigwedge^r TM(-\log D))\bigotimes
(\bigwedge^r{\mathcal V})^*$ is holomorphically trivial because both ${\mathcal V}$ and
$TM(-\log D)$ are holomorphically trivial. Fixing a trivialization of
$(\bigwedge^r TM(-\log D))\bigotimes (\bigwedge^r{\mathcal V})^*$, we
consider $\bigwedge^r \beta$ as a holomorphic function on $M$. This function is
nowhere vanishing because it does not vanish on $M^0$ and holomorphic functions on
$M$ are constants. Since $\bigwedge^r \beta$ is nowhere vanishing, the
homomorphism $\beta$ is an isomorphism.
\end{proof}

\subsection{A canonical logarithmic connection on equivariant bundles}\label{se3.2}

The Lie algebra of $G$ will be denoted by $\mathfrak g$.

Let $p\, :\, E_G\, \longrightarrow\, M$ be an algebraic principal $G$--bundle. Consider
the differential
\begin{equation}\label{dp}
dp\, :\, TE_G\,\longrightarrow\, p^*TM\, ,
\end{equation}
where $TE_G$ is the algebraic tangent bundle of $E_G$. The kernel of $dp$ will be denoted
by $T_{E_G/M}$. Using the action of $G$ on $E_G$, this subbundle $T_{E_G/M}\, \subset\,
TE_G$ is identified with the trivial vector bundle over $E_G$ with fiber $\mathfrak g$.

The action of $G$ on $E_G$ produces an action of $G$ on $TE_G$.
So we get an action of $G$ on the quasicoherent sheaf $p_*TE_G$ on $M$. The invariant part
$$
\text{At}(E_G)\,:=\, (p_*TE_G)^G\, \subset\, p_*TE_G
$$
is a locally free coherent sheaf; its coherence property follows from the fact that the
action of $G$ on the fibers of $p$ is transitive, implying that a $G$--invariant section of
$(TE_G)\vert_{p^{-1}(x)}$, $x\, \in\, M$, is uniquely determined by its evaluation at
just one point of the fiber $p^{-1}(x)$. Also note that $\text{At}(E_G)\,=\,(TE_G)/G$.
This $\text{At}(E_G)$ is known as the \textit{Atiyah
bundle} for $E_G$. Since $T_{E_G/M}$ is identified with $E_G\times \mathfrak g$, the
invariant direct image
$(p_*T_{E_G/M})^G$ is identified with the adjoint vector bundle $$\text{ad}(E_G)\,:=\,
E_G\times^G\mathfrak g\,\longrightarrow\, M$$
associated to $E_G$ for the adjoint action of $G$ on $\mathfrak g$. We note that
$\text{ad}(E_G)\,=\, T_{E_G/M}/G$. Now the differential $dp$
in \eqref{dp} produces a short exact sequence of holomorphic vector bundles on $M$
\begin{equation}\label{e3}
0\,\longrightarrow\, \text{ad}(E_G)\,\longrightarrow\, \text{At}(E_G)\,
\stackrel{\phi}{\longrightarrow}\, TM \,\longrightarrow\, 0\, ,
\end{equation}
which is known as the Atiyah exact sequence. A holomorphic connection on $E_G$ over $M$
is a holomorphic splitting 
$$
TM\,\longrightarrow\, \text{At}(E_G)
$$
of \eqref{e3} \cite{At}.

As before, setting $D\,=\, M\setminus M^0$, define
$$
\text{At}(E_G)(-\log D) \,:=\, \phi^{-1}(TM(-\log D))\, \subset\, \text{At}(E_G)\, ,
$$
where $\phi$ is the projection in \eqref{e3} and $TM(-\log D)$ is the subsheaf in
\eqref{e4}. So \eqref{e3} gives the following short exact sequence of holomorphic
vector bundles on $M$
\begin{equation}\label{e5}
0\,\longrightarrow\, \text{ad}(E_G)\,\longrightarrow\, \text{At}(E_G)(-\log D)\,
\stackrel{\phi}{\longrightarrow}\, TM(-\log D) \,\longrightarrow\, 0\, .
\end{equation}

A \textit{logarithmic connection} on $E_G$, with singular locus $D$, is
a holomorphic homomorphism
$$
\delta\, :\, TM(-\log D)\,\longrightarrow\,\text{At}(E_G)(-\log D)
$$
such that $\phi\circ\delta$ is the identity automorphism of $TM(-\log D)$, where $\phi$
is the homomorphism in \eqref{e5}. Just like the curvature of a connection, the
curvature of a logarithmic connection $\delta$ on $E_G$ is the obstruction
for the homomorphism $\delta$ to preserve the Lie algebra structure of the sheaf of sections
of $TM(-\log D)$ and $\text{At}(E_G)(-\log D)$
given by the Lie bracket of vector fields. In particular, $\delta$ is called \textit{integrable} (or
\textit{flat}) if it preserves the Lie algebra structure of the sheaf of sections of
$TM(-\log D)$ and $\text{At}(E_G)(-\log D)$ given by the Lie bracket of vector fields.

\begin{proposition}\label{prop2}
Let $(E_G\, , \widetilde{\rho})$ be a $T$--equivariant principal $G$--bundle on $M$.
Then $E_G$ admits an integrable logarithmic connection that restricts to the connection
${\mathcal D}^0$ on $M^0$ constructed in Section \ref{se2}.
\end{proposition}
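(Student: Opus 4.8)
The strategy is to produce the logarithmic connection directly from the $T$-action on $E_G$, mimicking the construction of ${\mathcal D}^0$ but now globally on $M$. The key observation is that a logarithmic connection is a splitting $\delta\,:\,TM(-\log D)\,\longrightarrow\,\text{At}(E_G)(-\log D)$ of the sequence \eqref{e5}, and by Lemma \ref{prop1} we have a canonical isomorphism $\beta\,:\,{\mathcal V}\,=\,M\times{\mathfrak t}\,\longrightarrow\,TM(-\log D)$. So it suffices to build a lift of the trivial bundle ${\mathcal V}$ into $\text{At}(E_G)(-\log D)$, i.e.\ a homomorphism ${\widetilde\beta}\,:\,{\mathcal V}\,\longrightarrow\,\text{At}(E_G)$ with $\phi\circ{\widetilde\beta}\,=\,\beta$; then $\delta\,:=\,{\widetilde\beta}\circ\beta^{-1}$ does the job, and it automatically lands in $\text{At}(E_G)(-\log D)$ because its image under $\phi$ is $TM(-\log D)$.

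First I would construct ${\widetilde\beta}$. The equivariant action $\widetilde\rho\,:\,T\times E_G\,\longrightarrow\,E_G$ differentiates, via the same infinitesimal-action recipe that produces $\beta$ from $\rho$, to a homomorphism ${\mathfrak t}\,\longrightarrow\,H^0(E_G,\,TE_G)$ whose image consists of $G$-invariant vector fields (because the $T$- and $G$-actions commute). Hence it descends to a homomorphism of ${\mathcal O}_M$-modules ${\widetilde\beta}\,:\,{\mathcal V}\,=\,M\times{\mathfrak t}\,\longrightarrow\,(TE_G)/G\,=\,\text{At}(E_G)$. The compatibility $p\circ\widetilde\rho\,=\,\rho\circ({\rm Id}_T\times p)$ says precisely that $dp$ intertwines the two infinitesimal actions, so $\phi\circ{\widetilde\beta}\,=\,\beta$. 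Composing with $\beta^{-1}$ from Lemma \ref{prop1} gives the desired splitting $\delta$ of \eqref{e5}, which is a logarithmic connection on $E_G$ singular over $D$.

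Next I would check integrability. The sheaf of sections of $TM(-\log D)$ is closed under the Lie bracket of vector fields, and under $\beta^{-1}$ it is identified with the constant sheaf ${\mathfrak t}\otimes_{\mathbb C}{\mathcal O}_M$; but ${\mathfrak t}$ is abelian, so the bracket of any two constant sections (i.e.\ images of elements of ${\mathfrak t}$) vanishes. On the Atiyah side, the fundamental vector fields of the $T$-action on $E_G$ again span an abelian Lie subalgebra of $H^0(E_G,\,TE_G)$ (the $T$-action is an action of an abelian group), so the images under ${\widetilde\beta}$ of elements of ${\mathfrak t}$ have vanishing Lie bracket in $\text{At}(E_G)(-\log D)$. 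Since $\delta$ matches the two frames $\{{\widetilde\beta}(\xi)\}$ and $\{\beta(\xi)\}$, $\xi\,\in\,{\mathfrak t}$, and the bracket on both sides vanishes on these frames, $\delta$ preserves the Lie algebra structure; hence $\delta$ is integrable.

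Finally I would verify that $\delta$ restricts on $M^0$ to ${\mathcal D}^0$. Over $M^0$ the map $\beta$ identifies $TM^0$ with the fundamental vector fields of the free transitive $T$-action, whose flat sections — under the trivialization coming from the $T$-orbit structure — are exactly the $T$-orbits; by construction ${\mathcal D}^0$ was defined to have the $T$-orbits in $E_G\vert_{M^0}$ as its flat sections. Since the horizontal distribution of $\delta$ over $M^0$ is spanned by the fundamental vector fields ${\widetilde\beta}(\xi)$ of the $T$-action on $E_G$, whose integral manifolds are the $T$-orbits, the two connections agree on $M^0$. The only genuinely nontrivial point in this argument is the passage from the infinitesimal $T$-action on the total space $E_G$ to a well-defined $\mathcal{O}_M$-linear morphism into $\text{At}(E_G)(-\log D)$ — i.e.\ checking that the construction of $\beta$ in \eqref{beta} lifts verbatim along $dp$; but this is exactly guaranteed by the two defining axioms of an equivariant bundle together with Lemma \ref{prop1}, so no essential obstacle arises.
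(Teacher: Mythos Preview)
Your proof is correct and follows essentially the same route as the paper: both constructions differentiate the $T$--action $\widetilde\rho$ on $E_G$ to obtain $G$--invariant fundamental vector fields, then compose with $\beta^{-1}$ from Lemma~\ref{prop1} to produce the splitting of \eqref{e5}. The only cosmetic difference is that the paper works upstairs on $E_G$ with the pullback bundle $\widetilde{\mathcal V}=p^*{\mathcal V}$ and then takes $G$--invariants, whereas you pass directly to ${\rm At}(E_G)=(TE_G)/G$ downstairs; and you justify landing in ${\rm At}(E_G)(-\log D)$ via the defining identity ${\rm At}(E_G)(-\log D)=\phi^{-1}(TM(-\log D))$, while the paper argues upstairs that the fundamental fields lie in $TE_G(-\log p^{-1}(D))$. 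You also supply the integrability check (via the abelianness of $T$) that the paper's proof leaves implicit.
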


\begin{proof}
Let 
$$
\widetilde{\mathcal V}\, :=\, E_G\times {\mathfrak t}\,\longrightarrow\, E_G
$$
be the trivial vector bundle over $E_G$ with fiber $\mathfrak t$. Note that
$p^*{\mathcal V}\,=\, \widetilde{\mathcal V}$, where $\mathcal V$ is the vector
bundle in \eqref{cV}, and $p$, as before, is the projection of $E_G$ to $M$.

The action $\widetilde{\rho}$ of $T$ on $E_G$ produces a homomorphism
\begin{equation}\label{e6}
\widetilde{\beta}\, :\, \widetilde{\mathcal V}\,\longrightarrow\, TE_G\, .
\end{equation}
Since $p^{-1}(D)$ is preserved by the action of $T$ on $E_G$, the induced action of $T$
on ${\mathcal O}_{E_G}$ preserves the subsheaf ${\mathcal O}_{E_G}(- p^{-1}(D))$. 
Hence the image of $\widetilde{\beta}$ lies inside the subsheaf
$$
TE_G(-\log p^{-1}(D))\, \subset\, TE_G\, .
$$
Note that $p^{-1}(D)$ is a simple normal crossing divisor on $E_G$ because $D$ is
a simple normal crossing divisor on $M$.

In Lemma \ref{prop1}(2) we saw that $\beta$ is an isomorphism. Consider
$$
p^*\beta^{-1}\, :\, p^*(TM(-\log D))
\,\longrightarrow\, p^*{\mathcal V}\,=\, \widetilde{\mathcal V}\, .
$$
Pre-composing this with $\widetilde{\beta}$ in \eqref{e6}, we have
$$
\widetilde{\beta}\circ (p^*\beta^{-1})\, :\, p^*(TM(-\log D))
\,\longrightarrow\,TE_G(-\log p^{-1}(D))\, .
$$
We observe that the homomorphism $\widetilde{\beta}\circ (p^*\beta^{-1})$ is
$G$--equivariant for the trivial action of $G$ on $p^*(TM(-\log D))$ and the
action of $G$ on $TE_G(-\log p^{-1}(D))$ induced by the action of $G$ on $E_G$.
Therefore, taking the $G$--invariant parts of the direct images by $p$, the above
homomorphism $\widetilde{\beta}\circ (p^*\beta^{-1})$ produces a homomorphism
$$
\beta'\, :\,TM(-\log D)\,=\, (p_*p^*(TM(-\log D)))^G
$$
$$
\,\longrightarrow\, (p_*TE_G(-\log p^{-1}(D)))^G\,=\,
\text{At}(E_G)(-\log D)\, .
$$
It is now straightforward to check that the above homomorphism $\beta'$ produces a
holomorphic splitting of the exact sequence in \eqref{e5}. Therefore, $\beta'$ defines
a logarithmic connection on $E_G$ singular on $D$. The restriction of this logarithmic
connection to $M^0$ clearly coincides with the connection ${\mathcal D}^0$ constructed
in Section \ref{se2}.
\end{proof}

\section{A criterion for equivariance}

For each point $t\, \in\, T$, define the automorphism
$$
\rho_t\, :\, M\,\longrightarrow\, M\, , \ ~ x\, \longmapsto\, \rho(t\, , x)\, ,
$$
where $\rho$ is the action in \eqref{e1}. If $(E_G\, , \widetilde{\rho})$ is
a $T$--equivariant principal $G$--bundle on $M$, then clearly the map
$$
E_G\, \longrightarrow\, E_G\, , \ ~ z\, \longmapsto\,\widetilde{\rho}(t\, , z)
$$
is an isomorphism of the principal $G$--bundle $\rho^*_t E_G$ with $E_G$. The
aim in this section is to prove a converse of it.

Take an algebraic principal $G$--bundle
$$p\, :\, E_G\, \longrightarrow\, M\, .$$
Let $\mathcal G$ be the set of all pairs of the form $(t\, ,f)$, where $t\, \in\, T$ and
$$
f\, :\, E_G\, \longrightarrow\, E_G
$$
is an algebraic automorphism of the variety $E_G$ satisfying the
following two conditions:
\begin{enumerate}
\item $p\circ f\, =\,\rho_t\circ p$, and

\item $f$ intertwines the action of $G$ on $E_G$.
\end{enumerate}
Note that the above two conditions imply that $f$ is an algebraic isomorphism of the
principal $G$--bundle $\rho^*_t E_G$ with $E_G$.

We have the following composition on the above defined set $\mathcal G$:
$$
(t_1\, ,f_1)\cdot (t_2\, ,f_2)\, :=\,(t_1\circ t_2\, , f_1\circ f_2)\, .
$$
The inverse of $(t\, ,f)$ is $(t^{-1}\, ,f^{-1})$. These operations make
$\mathcal G$ a group. In fact, $\mathcal G$ has the structure of an affine
algebraic group defined over $\mathbb C$. Let $\mathcal A$ denote the group
of all algebraic automorphisms of the principal $G$--bundle $E_G$. So
$\mathcal A$ is a subgroup of ${\mathcal G}$ with the inclusion map
being $f\, \longmapsto\, (e\, ,f)$. We have a natural projection
$$
h\, :\, {\mathcal G}\,\longrightarrow\, T\, ,~\ ~ (t\, ,f)\,\longmapsto\, t
$$
which fits in the following exact sequence of complex affine algebraic groups:
\begin{equation}\label{e9}
0\,\longrightarrow\, {\mathcal A}\,\longrightarrow\, {\mathcal G}
\,\stackrel{h}{\longrightarrow}\, T\, .
\end{equation}
We note that there is a tautological action of ${\mathcal G}$ on $E_G$; the action of
any $(t\, ,f)\,\in\, {\mathcal G}$ on $E_G$ is
given by the map defined by $y\, \longmapsto\, f(y)$.

Now assume that $E_G$ satisfies the condition that for every $t\, \in\, T$, the pulled
back principal $G$--bundle $\rho^*_t E_G$ is isomorphic to $E_G$. This assumption
is equivalent to the statement that the homomorphism $h$ in \eqref{e9} is surjective.

In view of the above assumption, the sequence in \eqref{e9} becomes
the following short exact sequence of complex affine algebraic groups
\begin{equation}\label{e7}
0\,\longrightarrow\, {\mathcal A}\,\longrightarrow\, {\mathcal G}
\,\stackrel{h}{\longrightarrow}\, T\,\longrightarrow\, 0\, .
\end{equation}

Let ${\mathcal G}^0\, \subset\,{\mathcal G}$ be the connected component containing
the identity element. Since $T$ is connected and $h$ is surjective, the restriction
of $h$ to ${\mathcal G}^0$ is also surjective. Therefore, from \eqref{e7} we have
the following short exact sequence of affine complex algebraic groups
\begin{equation}\label{e8}
0\,\longrightarrow\, {\mathcal A}^0\,\stackrel{\iota_{\mathcal A}}{\longrightarrow}\,
{\mathcal G}^0\,\stackrel{h^0}{\longrightarrow}\, T\,\longrightarrow\, 0\, ,
\end{equation}
where ${\mathcal A}^0\,:=\, {\mathcal A}\bigcap {\mathcal G}^0$, and
$h^0\,:=\, h\vert_{{\mathcal G}^0}$.

Take a maximal torus $T_{\mathcal G}\, \subset\, {\mathcal G}^0$. From
\eqref{e8} it follows that the restriction
$$
h'\,:=\, h^0\vert_{T_{\mathcal G}}\, :\, T_{\mathcal G}
\, \longrightarrow\, T
$$
is surjective. Define $T_{\mathcal A}\,:=\, {\mathcal A}^0\bigcap T_{\mathcal G}
\,\subset\, T_{\mathcal G}$ using the homomorphism $\iota_{\mathcal A}$ in \eqref{e8}.
Therefore, from \eqref{e8} we have the following short exact sequence of
algebraic groups
\begin{equation}\label{e9p}
0\,\longrightarrow\, T_{\mathcal A}\,\stackrel{\iota_{\mathcal A}\vert_{T_{\mathcal
A}}}{\longrightarrow}\, T_{\mathcal G}\,\stackrel{h'}{\longrightarrow}\, T\,\longrightarrow\, 0\, .
\end{equation}
Recall that $\mathcal G$ has a tautological action on $E_G$. Therefore, the
subgroup $T_{\mathcal G}$ has a tautological action on $E_G$ which is the restriction
of the tautological action of $\mathcal G$.

Now we assume that the group $G$ is reductive.

A parabolic subgroup of $G$ is a connected Zariski
closed subgroup $P\, \subset\, G$ such that the variety $G/P$ is
projective. For a parabolic subgroup $P$, its unipotent radical will be denoted
by $R_u(P)$. A Levi subgroup of $P$ is a connected reductive subgroup $L(P)\,
\subset\, P$ such that the composition
$$
L(P)\,\hookrightarrow\, P\, \longrightarrow\, P/R_u(P)
$$
is an isomorphism. Levi subgroups exists, and any two Levi subgroups of $P$
differ by conjugation by an element of $R_u(P)$
\cite[p. 184--185, \S~30.2]{Hu}, \cite[p. 158, 11.22, 11.23]{Bo}.

Let $\text{Ad}(E_G)\, :=\, E_G\times^G G\,\longrightarrow\, M$ be the adjoint
bundle associated to $E_G$ for the adjoint action of $G$ on itself. The fibers
of $\text{Ad}(E_G)$ are groups identified with $G$ up to an inner automorphism;
the corresponding Lie algebra bundle is $\text{ad}(E_G)$. We note that
$\mathcal A$ in \eqref{e7} is the space of all algebraic sections of $\text{Ad}(E_G)$.

Using the action of $T_{\mathcal A}$ on $E_G$, we have
\begin{itemize}
\item{} a Levi subgroup $L(P)$ of a parabolic subgroup $P$ of $G$, and

\item an algebraic reduction of structure group $E_{L(P)}\, \subset\,
E_G$ of $E_G$ to $L(P)$ which is preserved by the tautological action
of $T_{\mathcal G}$ on $E_G$,
\end{itemize}
such that the image of $T_{\mathcal A}$ in $\text{Ad}(E_G)$ (recall that
the elements of $\mathcal A$ are sections of $\text{Ad}(E_G)$) lies in the
connected component, containing the identity element, of the
center of each fiber of $\text{Ad}(E_{L(P)})\, \subset\, \text{Ad}(E_G)$
(see \cite{BBN}, \cite{BP} for the construction of $E_{L(P)}$). The construction
of $E_{L(P)}$ requires fixing a point $z_0$ of $E_G$, and $E_{L(P)}$ contains $z_0$.
Using $z_0$, the fiber $(E_{L(P)})_{p(z_0)}$ is identified with $L(P)$.
Moreover, the evaluation, at $p(z_0)$, of the sections of
$\text{Ad}(E_G)$ corresponding to the elements of $T_{\mathcal A}$ makes
$T_{\mathcal A}$ a subgroup of the connected component,
containing the identity element, of the center of $E_{L(P)}$; in particular, this
evaluation map on $T_{\mathcal A}$ is injective (see the second paragraph in
\cite[p. 230, Section 3]{BBN}).
We briefly recall (from \cite{BBN}, \cite{BP}) the argument that
the evaluation map on semisimple elements of $\mathcal A$ is injective.
Let $\xi$ be a semisimple element of ${\mathcal A}\,=\, \Gamma(M,\, \text{Ad}(E_G))$.
Since $\xi$ is semisimple, for each point $x\, \in\, M$, the evaluation
$\xi(x)$ is a semisimple element of $\text{Ad}(E_G))_x$. The group
$\text{Ad}(E_G))_x$ is identified with $G$ up to an inner automorphism of $G$.
All conjugacy classes of semisimple element of $G$ are parametrized by
$T_G/W_{T_G}$, where $T_G$ is a maximal torus in $G$, and $W_{T_G}\,=\, N(T_G)/T_G$ is
the Weyl group with $N(T_G)$ being the normalizer of $T_G$ in $G$. We note that
$T_G/W_{T_G}$ is an affine variety. Therefore, we get a morphism $\xi'\, :\,
M\, \longrightarrow\, T_G/W_{T_G}$ that sends any $x\,\in\, M$ to the conjugacy class
of $\xi(x)$. Since $M$ is a projective variety and $T_G/W_{T_G}$ is an affine variety,
we conclude that $\xi'$ is a constant map. So if $\xi(x) =\, e$ for some
$x\,\in\, M$, then $\xi\,=\, e$ identically.

Let $Z^0_{L(P)}\, \subset\, L(P)$ be the connected component, 
containing the identity element, of the center. We note that $Z^0_{L(P)}$
is a product of copies of ${\mathbb G}_m$. Therefore, the above injective homomorphism
$T_{\mathcal A}\,\longrightarrow\, Z^0_{L(P)}$ extends to a homomorphism
$$
\eta\, :\, T_{\mathcal G}\,\longrightarrow\, Z^0_{L(P)}\, .
$$
Define
\begin{equation}\label{ep}
\eta'\, :=\, \tau\circ \eta\, ,
\end{equation}
where $\tau$ is
the inversion homomorphism of $Z^0_{L(P)}$ defined by $g\,\longmapsto\, g^{-1}$.

Consider the action of $T_{\mathcal G}$ on $E_{L(P)}$; recall that $E_{L(P)}$ is 
preserved by the tautological action of $T_{\mathcal G}$ on $E_G$. We can twist this 
action on $E_{L(P)}$ by $\eta'$ in \eqref{ep}, because the actions of $Z^0_{L(P)}$ 
and $L(P)$ on $E_{L(P)}$ commute. For this new action, the group $T_{\mathcal A}$ 
clearly acts trivially on $E_{L(P)}$.

Consider the above action of $T_{\mathcal G}$ on $E_{L(P)}$ constructed using $\eta'$.
Since $T_{\mathcal A}$ acts trivially on $E_{L(P)}$, the action of $T_{\mathcal G}$
on $E_{L(P)}$ descends to an action of $T$ on $E_{L(P)}$ (see \eqref{e9p}). The principal $G$--bundle
$E_G$ is the extension of structure group of $E_{L(P)}$ using the inclusion of $L(P)$ in $G$.
Therefore, the above action on $T$ on $E_{L(P)}$ produces an action of $T$ on $E_G$. More
precisely, the total space of $E_G$ is the quotient of $E_{L(P)}\times G$ where two elements
$(z_1\, ,g_1)$ and $(z_2\, ,g_2)$ of $E_{L(P)}\times G$ are identified if there is an
element $g\,\in\, L(P)$ such that $z_2\,=\, z_1g$ and $g_2\,=\, g^{-1}g_1$. Now the
action of $T$ on $E_{L(P)}\times G$, given by the above action of $T$ on $E_{L(P)}$ and
the trivial action of $T$ on $G$, descends to an action of $T$ on the quotient space
$E_G$. Consequently, $E_G$ admits a $T$--equivariant structure.

Therefore, we have proved the following:

\begin{proposition}\label{prop3}
Let $G$ be reductive, and let $E_G\,\longrightarrow\, M$ be a principal $G$--bundle such that
for every $t\, \in\, T$, the pulled back principal $G$--bundle $\rho^*_t E_G$ is isomorphic to $E_G$.
Then $E_G$ admits a $T$--equivariant structure.
\end{proposition}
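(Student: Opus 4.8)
The plan is to translate the hypothesis into a statement about a group extension and then to split a suitable torus inside it, using the structure theory of reductive groups. First I would form the affine algebraic group $\mathcal G$ of pairs $(t,f)$, where $t\in T$ and $f\colon E_G\to E_G$ is an algebraic isomorphism of $\rho_t^*E_G$ with $E_G$, with multiplication $(t_1,f_1)\cdot(t_2,f_2)=(t_1t_2,\,f_1\circ f_2)$. It acts tautologically on $E_G$ by $(t,f)\cdot y=f(y)$, and it fits in the exact sequence $0\to\mathcal A\to\mathcal G\xrightarrow{h}T$ of \eqref{e9}, where $\mathcal A$ is the automorphism group of the principal $G$-bundle $E_G$ and $h(t,f)=t$. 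The assumption that $\rho_t^*E_G\cong E_G$ for every $t$ is exactly the surjectivity of $h$, so we get the short exact sequence \eqref{e7}. The point to keep in mind is that any group-theoretic splitting of $h$ --- in fact, any homomorphic section of $h$ defined only on a maximal torus of $\mathcal G$ --- would produce, via the tautological action, an algebraic $T$-action on $E_G$ covering $\rho$ and commuting with $G$, i.e. a $T$-equivariant structure. Thus the task is to construct such a section.

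I would pass to the identity component $\mathcal G^0$, which still surjects onto $T$ because $T$ is connected, obtaining the sequence \eqref{e8} with kernel $\mathcal A^0$. Then I choose a maximal torus $T_{\mathcal G}\subset\mathcal G^0$; since a surjection of algebraic groups carries maximal tori onto maximal tori and $T$ is its own maximal torus, $h^0$ restricts to a surjection $h'\colon T_{\mathcal G}\to T$ with kernel $T_{\mathcal A}:=\mathcal A^0\cap T_{\mathcal G}$, as in \eqref{e9p}. The tautological action of $\mathcal G$ restricts to an algebraic action of $T_{\mathcal G}$ on $E_G$ lying over the $T$-action on $M$ via $h'$, and it now suffices to modify this action so that $T_{\mathcal A}$ acts trivially; the modified action will then descend along $h'$ to a $T$-action on $E_G$ with the required properties.

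The real content, and what I expect to be the main obstacle, is arranging this triviality, and it is here that reductivity of $G$ enters. The torus $T_{\mathcal A}\subset\mathcal A=\Gamma(M,\mathrm{Ad}(E_G))$ consists of semisimple automorphisms of $E_G$; one first notes that its evaluation at any point $x\in M$ is injective, because a semisimple section of $\mathrm{Ad}(E_G)$ has constant conjugacy class --- the conjugacy-class map $M\to T_G/W_{T_G}$ is a morphism from a projective variety to an affine one, hence constant. Then, following \cite{BBN}, \cite{BP}, the torus $T_{\mathcal A}$ canonically determines a parabolic subgroup $P\subset G$, a Levi subgroup $L(P)\subset P$, and (after fixing a base point $z_0\in E_G$ lying in it) a reduction of structure group $E_{L(P)}\subset E_G$, in such a way that the sections of $\mathrm{Ad}(E_G)$ corresponding to the elements of $T_{\mathcal A}$ take values in the connected centre $Z^0_{L(P)}$ of $L(P)$, realized fiberwise inside $\mathrm{Ad}(E_{L(P)})$; moreover, because $T_{\mathcal G}$ commutes with $T_{\mathcal A}$, it permutes the canonical data attached to $T_{\mathcal A}$ and therefore preserves $E_{L(P)}$. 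I would take this construction from \cite{BBN}, \cite{BP} rather than redo it.

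Granting this, the rest is formal. Since $Z^0_{L(P)}$ is a product of copies of $\mathbb G_m$, the injection $T_{\mathcal A}\hookrightarrow Z^0_{L(P)}$ extends to a homomorphism $\eta\colon T_{\mathcal G}\to Z^0_{L(P)}$, and I set $\eta'=\tau\circ\eta$ with $\tau$ the inversion of $Z^0_{L(P)}$, as in \eqref{ep}. Because the actions of $Z^0_{L(P)}$ and $L(P)$ on $E_{L(P)}$ commute, I can twist the $T_{\mathcal G}$-action on $E_{L(P)}$ by $\eta'$; for the twisted action, $T_{\mathcal A}$ acts trivially on $E_{L(P)}$, so by \eqref{e9p} the twisted $T_{\mathcal G}$-action descends to a $T$-action on $E_{L(P)}$ covering $\rho$. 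Finally, writing $E_G$ as the quotient of $E_{L(P)}\times G$ under $(z_1,g_1)\sim(z_2,g_2)$ whenever $z_2=z_1h$ and $g_2=h^{-1}g_1$ for some $h\in L(P)$, the $T$-action on $E_{L(P)}$ together with the trivial action on $G$ descends to a $T$-action on $E_G$ that covers $\rho$ and commutes with $G$. This is the desired $T$-equivariant structure, completing the proof.
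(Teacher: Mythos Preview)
Your proposal is correct and follows the paper's own argument essentially step for step: the same extension \eqref{e7}, the passage to the identity component and a maximal torus as in \eqref{e8}--\eqref{e9p}, the Levi reduction $E_{L(P)}$ from \cite{BBN}, \cite{BP} together with the injectivity of evaluation via the conjugacy-class map to $T_G/W_{T_G}$, and the twist by $\eta'$ in \eqref{ep} to kill $T_{\mathcal A}$ and descend. There is no substantive difference in strategy or in the key inputs.
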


For vector bundles on $M$, Proposition \ref{prop3} was proved by Klyachko \cite[p. 342, 
Proposition 1.2.1]{Kl}.

\subsection{Equivariance property from a logarithmic connection}

\begin{theorem}\label{thm1}
Let $G$ be reductive, and let $p\, :\, E_G\,\longrightarrow\, M$ be a principal $G$--bundle
admitting a logarithmic connection whose singularity locus is contained in the divisor
$D\,=\, M\setminus M^0$. Then $E_G$ admits a $T$--equivariant structure.
\end{theorem}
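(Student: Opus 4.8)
The plan is to deduce, from the given logarithmic connection, that $\rho_t^*E_G$ is isomorphic to $E_G$ for every $t\,\in\, T$, and then to invoke Proposition~\ref{prop3} (this is the only place reductivity of $G$ enters). By the discussion preceding \eqref{e7}, the condition ``$\rho_t^*E_G\,\cong\, E_G$ for all $t$'' is precisely the surjectivity of the homomorphism $h\,:\,{\mathcal G}\,\longrightarrow\, T$ of \eqref{e9}. First note that we may assume the singularity locus equals $D$: if the connection is singular over a sub-divisor $D'\,\subseteq\, D$, then $TM(-\log D)\,\subseteq\, TM(-\log D')$, and since $\phi$ sends the image of the splitting back into $TM(-\log D)$, restriction produces a splitting of \eqref{e5}; thus we are given $\delta\,:\, TM(-\log D)\,\longrightarrow\, \text{At}(E_G)(-\log D)$ with $\phi\circ\delta\,=\,\text{Id}$.

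The crucial preliminary is to identify the Lie algebra of the algebraic group ${\mathcal G}$. A tangent vector to ${\mathcal G}$ at the identity $(e\, ,\text{Id})$ is a pair $(v\, ,\widetilde X)$, where $v\,\in\,\mathfrak t$ and $\widetilde X$ is a $G$--invariant algebraic vector field on $E_G$ with $dp(\widetilde X)\,=\, p^*(\beta(v))$; viewing $\widetilde X$ as a section of $\text{At}(E_G)\,=\,(p_*TE_G)^G$, this condition says $\phi(\widetilde X)\,=\,\beta(v)$. Since $M$ is projective we have $H^0(M,\,{\mathcal V})\,=\,\mathfrak t$, and $\beta\,:\,{\mathcal V}\,\longrightarrow\, TM(-\log D)$ is an isomorphism by Lemma~\ref{prop1}; hence $v$ is determined by $\widetilde X$, and we get a natural isomorphism
$$
\text{Lie}({\mathcal G})\;=\;H^0\bigl(M,\,\text{At}(E_G)(-\log D)\bigr)\, ,
$$
under which the differential $dh\,:\,\text{Lie}({\mathcal G})\,\longrightarrow\,\mathfrak t$ becomes $\beta^{-1}\circ\phi$, with $\phi\,:\, H^0(M,\,\text{At}(E_G)(-\log D))\,\longrightarrow\, H^0(M,\,TM(-\log D))$ the map in \eqref{e5} and $\beta^{-1}\,:\,H^0(M,\,TM(-\log D))\,\longrightarrow\,\mathfrak t$ the inverse of the isomorphism of Lemma~\ref{prop1}.

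The logarithmic connection now finishes the argument. On global sections, $\delta$ induces a linear map $\delta_*\,:\,H^0(M,\,TM(-\log D))\,\longrightarrow\, H^0(M,\,\text{At}(E_G)(-\log D))\,=\,\text{Lie}({\mathcal G})$ which is a section of $\phi$; hence $dh\circ\delta_*\,=\,\beta^{-1}\circ\phi\circ\delta_*\,=\,\beta^{-1}$ is an isomorphism onto $\mathfrak t$, so $dh$ is surjective. Since $T$ is connected, a homomorphism of algebraic groups with surjective differential is surjective; therefore $h$ is surjective and \eqref{e9} becomes the short exact sequence \eqref{e7}. (Note that integrability of $\delta$ plays no role here.)

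Finally, surjectivity of $h$ says exactly that $\rho_t^*E_G$ is isomorphic to $E_G$ for every $t\,\in\, T$, so Proposition~\ref{prop3} applies and yields a $T$--equivariant structure on $E_G$. The one step with any substance is the identification of $\text{Lie}({\mathcal G})$ with $H^0(M,\,\text{At}(E_G)(-\log D))$, together with the description of $dh$: it uses only that ${\mathcal G}$ is an algebraic group, so that its Lie algebra is the space of infinitesimal automorphisms, and that $\beta$ is an isomorphism (Lemma~\ref{prop1}). Everything else being formal, I do not anticipate any real obstacle.
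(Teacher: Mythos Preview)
Your argument is correct and follows essentially the same route as the paper: pass to global sections, use Lemma~\ref{prop1} to identify $H^0(M,\,TM(-\log D))$ with $\mathfrak t$, map $H^0(M,\,\text{At}(E_G)(-\log D))$ into $\text{Lie}({\mathcal G})$, observe that $\delta$ on global sections splits $dh$, and then invoke Proposition~\ref{prop3}. The only cosmetic difference is that the paper states the map $\theta\,:\,H^0(M,\,\text{At}(E_G)(-\log D))\,\longrightarrow\,\text{Lie}({\mathcal G})$ as an injection rather than an isomorphism (your stronger claim is also correct, since $\text{At}(E_G)(-\log D)=\phi^{-1}(TM(-\log D))$ and every global section of $TM(-\log D)$ lies in $\beta(\mathfrak t)$), but only injectivity is needed for the argument.
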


\begin{proof}
Since $E_G$ admits a logarithmic connection, by definition, there is a homomorphism
of coherent sheaves
$$
\delta\, :\, TM(-\log D)\,\longrightarrow\,\text{At}(E_G)(-\log D)
$$
such that $\phi\circ\delta$ is the identity automorphism of $TM(-\log D)$, where
$\phi$ is the homomorphism in \eqref{e5}. Let
$$
\widehat{\delta}\, :\, H^0(M, \, TM(-\log D))\,\longrightarrow\,H^0(M,\,
\text{At}(E_G)(-\log D))
$$
be the homomorphism of global sections given by $\delta$. From Lemma \ref{prop1}(2) we
know that $H^0(M, \, TM(-\log D))$ is the Lie algebra $\mathfrak t$ of $T$.

We will now show that there is a natural injective homomorphism
\begin{equation}\label{theta}
\theta\, :\, H^0(M,\, \text{At}(E_G)(-\log D))\,\longrightarrow\,\text{Lie}(\mathcal G)\, ,
\end{equation}
where $\text{Lie}(\mathcal G)$ is the Lie algebra of the group $\mathcal G$ in \eqref{e9}.

The elements of $\text{Lie}(\mathcal G)$ are all holomorphic sections $s\, \in\,
H^0(M,\, \text{At}(E_G))$ such that the vector field $\phi(s)$, where $\phi$ is the projection
in \eqref{e3}, is of the form $\beta(s')$, where $s'\, \in\, \mathfrak t$ and $\beta$ is the
homomorphism in \eqref{beta}. Now, if
$$
s\, \in\, H^0(M,\, \text{At}(E_G)(-\log D))\, \subset\, H^0(M,\, \text{At}(E_G))\, ,
$$
then $\phi(s)$ is a holomorphic section of $TM(-\log D)$ (see \eqref{e5}). From
Lemma \ref{prop1}(2) it now follows that $\phi(s)$ is of the form $\beta(s')$,
where $s'\, \in\, \mathfrak t$. This gives us the injective homomorphism in
\eqref{theta}.

Finally, consider the composition
$$
\theta\circ\widehat{\delta}\, :\, {\mathfrak t}\,=\, H^0(M, \, TM(-\log D))\,\longrightarrow\,
\text{Lie}(\mathcal G)\, .
$$
{}From its construction it follows that
$$
(dh)\circ \theta\circ\widehat{\delta}\,=\, {\rm Id}_{\mathfrak t}\, ,
$$
where $dh\, :\, \text{Lie}(\mathcal G)\,\longrightarrow\,{\mathfrak t}$ is the
homomorphism of Lie algebras given by $h$ in \eqref{e9}. In particular, $dh$ is
surjective. Since $T$ is connected, this immediately implies that the homomorphism
$h$ is surjective. Now from Proposition \ref{prop3} it follows that
$E_G$ admits a $T$--equivariant structure.
\end{proof}

\end{document}